\numberwithin{equation}{section}
\newtheorem{theorem}{Theorem}[section]
\newtheorem{proposition}[theorem]{Proposition}
\newtheorem{corollary}[theorem]{Corollary}
\theoremstyle{definition}
\newtheorem{definition}[theorem]{Definition}
\theoremstyle{remark}
\newtheorem{remark}[theorem]{Remark}
\newtheorem{question}[theorem]{Question}
\newcommand{\Spec}{\operatorname{Spec}}
\newcommand{\drdepth}{\operatorname{DR-depth}}
\newcommand{\lcd}{\operatorname{lcd}}
\newcommand{\depth}{\operatorname{depth}}
\newcommand{\fm}{\frak{m}}
\newcommand{\fn}{\frak{n}}
\begin{document}

\title[ de Rham depth]
{On a lower bound of de Rham depth of affine cone of a projective space in characteristic zero}

\author[Eghbali]{Majid Eghbali \   }

\address{Department of Mathematics, Tafresh University, Tafresh, 39518--79611, Iran.}
\email{m.eghbali@tafreshu.ac.ir}
\email{m.eghbali@yahoo.com}


\subjclass[2010]{14F40, 14F17.}

\keywords{Algebraic de rham cohomology, De Rham depth, Homological depth, Affine cone.}

\begin{abstract}
Let $Y \subset \mathbb{P}^n_k$ be a proper closed subset in the projective $n$-space over a field $k$ of characteristic zero and let $C(Y)$ be its affine cone. Let $I \subset R=k[x_0, \ldots, x_n]$ be the homogeneous defining
ideal of $Y$ with homogeneous maximal ideal $\fm=(x_0, \ldots, x_n)$. In this paper, we find out a lower bound of de Rham depth of $C(Y)$ in terms of $\depth R/I$.
\end{abstract}

\maketitle

\section{Introduction}

Throughout the paper, let $Y \subset \mathbb{P}^n_k$ be a proper closed subset and
let $I \subset R=k[x_0, \ldots, x_n]$ be the homogeneous defining
ideal of $Y$ over a field $k$ of characteristic zero with homogeneous maximal ideal $\fm=(x_0, \ldots, x_n)$.

Algebraic de Rham cohomology was defined by Grothendieck \cite{Gr66} for locally of finite type scheme $X$ over a field $k$ and smooth over $k$ as the hypercohomology $\mathbb{H}^i(X,\Omega^{\bullet}_{X})$, where $\Omega^{\bullet}_{X}$ is the complex of sheaves of regular differentials on $X$. 
Later in \cite{Ha75}, Hartshorne defined it for schemes of finite type (with arbitrary singularities) embedded as a closed subscheme in a smooth scheme (see Preliminaries for details). One of the interesting applications of algebraic de Rham cohomology is the famous paper of Ogus (\cite{Og}), where he gives a complete answer to the question raised in \cite{Ha68} on the cohomological dimension of $\mathbb{P}^n_k \setminus Y$ in terms of the de Rham cohomology of $Y$. In the mentioned paper, Ogus defines the de Rham depth of $Y$ (see Definition \ref{0.2}) and shows its relationship with the local cohomological dimension $\lcd (\mathbb{P}^n_k, Y)$ (\cite[Theorem 4.1]{Og}). Notice that the de Rham depth of $Y$ always is bounded above by $\dim Y$.

In the present paper, we consider the following question:
\begin{question}\label{Q}
 If $\depth R/I \geq t$, ($t$ is an integer), then what can we say about a lower bound of de Rham depth of the affine cone of $Y$?
\end{question}
In order to answer the aforementioned question, we prove the following result. 

\begin{theorem} (Theorem \ref{3})
Suppose that $\depth R/I =t \geq 3$. Furthermore suppose that
\begin{itemize}
  \item[(a)] the number of consecutive even integers $i$ starting from $2$ with $H^i_{dR}(Y) =k$ is $t-2$,
  and
  \item[(b)] the number of consecutive odd integers $i$ starting from $3$ with $H^i_{dR}(Y) =0$ is $t-3$.
\end{itemize}
Then,
 $\drdepth C(Y) \geq 2t-2$.
\end{theorem}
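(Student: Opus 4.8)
The plan is to reduce the statement to a vanishing of the local algebraic de Rham cohomology of the affine cone $C(Y)=\Spec R/I$ at its vertex $v$ (the point cut out by $\fm$), and then to obtain that vanishing by confronting the $\mathbb G_m$-bundle structure of $C(Y)\setminus\{v\}$ over $Y$ with hypotheses (a) and (b). By the definition of de Rham depth (Definition \ref{0.2}) and its relation to local cohomological dimension \cite[Theorem 4.1]{Og}, the bound $\drdepth C(Y)\ge 2t-2$ follows once $H^i_{\{v\},dR}(C(Y))=0$ for $i<2t-2$; the vertex is the essential case here, since at a closed point $p\in C(Y)\setminus\{v\}$ over $y\in Y$ the cone is, locally, a product, so $H^i_{\{p\},dR}(C(Y))\cong H^{i-2}_{\{y\},dR}(Y)$ and such points impose only the weaker requirement $\drdepth Y\ge 2t-4$.

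For the vertex I would combine two long exact sequences. Put $U:=C(Y)\setminus\{v\}$. The excision triangle $R\Gamma_{\{v\},dR}(C(Y))\to R\Gamma_{dR}(C(Y))\to R\Gamma_{dR}(U)$ gives
\[
\cdots\to H^{i-1}_{dR}(C(Y))\to H^{i-1}_{dR}(U)\to H^i_{\{v\},dR}(C(Y))\to H^i_{dR}(C(Y))\to\cdots .
\]
Over $\mathbb C$ the analytification of an affine cone is contractible (radial scaling onto the vertex), and algebraic de Rham cohomology computes the singular cohomology of the analytification and is invariant under extension of a ground field of characteristic zero \cite{Ha75}; hence $H^i_{dR}(C(Y))=k$ for $i=0$ and $0$ otherwise. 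Since $\depth R/I\ge 2$ forces $C(Y)$ and $U$ to be connected, the sequence collapses to $H^i_{\{v\},dR}(C(Y))\cong H^{i-1}_{dR}(U)$ for $i\ge 2$, with $H^0_{\{v\},dR}(C(Y))=H^1_{\{v\},dR}(C(Y))=0$. It therefore suffices to show $H^j_{dR}(U)=0$ for $1\le j\le 2t-4$. Here $\pi\colon U\to Y$ is the $\mathbb G_m$-torsor attached to $\mathcal O_Y(\pm1)$, and the associated Gysin sequence reads
\[
\cdots\to H^{j-2}_{dR}(Y)\xrightarrow{\ \cup\xi\ }H^{j}_{dR}(Y)\xrightarrow{\ \pi^*\ }H^{j}_{dR}(U)\to H^{j-1}_{dR}(Y)\xrightarrow{\ \cup\xi\ }H^{j+1}_{dR}(Y)\to\cdots ,
\]
with $\xi\in H^2_{dR}(Y)$ the restricted hyperplane class.

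It remains to pin down $H^{\bullet}_{dR}(Y)$ in the relevant range and run the Gysin sequence. From $\depth R/I=t$ one has $H^p_{\fm}(R/I)=0$ for $p<t$, hence $H^q(Y,\mathcal O_Y(m))=0$ for $1\le q\le t-2$ and all $m\in\BZ$; in particular $H^1(Y,\mathcal O_Y)=0$, which forces $H^1_{dR}(Y)=0$ (immediate for smooth $Y$ by Hodge symmetry, and in general via the Hodge/Du Bois filtration on $H^{\bullet}_{dR}$), while connectedness gives $H^0_{dR}(Y)=k$. Combining with (a) and (b): $H^{2a}_{dR}(Y)=k$ for $0\le a\le t-2$ and $H^{2a+1}_{dR}(Y)=0$ for $0\le a\le t-3$. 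Since $\mathcal O_Y(1)$ is very ample, $\xi^{a}$ is represented by a linear section and is nonzero in $H^{2a}_{dR}(Y)$ for $a\le\dim Y$ (note $\dim Y\ge t-2$ because $H^{2t-4}_{dR}(Y)\ne 0$); as that group is one-dimensional in this range, $\cup\xi\colon H^{2a}_{dR}(Y)\to H^{2a+2}_{dR}(Y)$ is an isomorphism for $0\le a\le t-3$, whereas $\cup\xi$ between the odd groups vanishes there for dimension reasons. Plugging this into the Gysin sequence and chasing degrees yields $H^j_{dR}(U)=0$ for $1\le j\le 2t-4$, whence $H^i_{\{v\},dR}(C(Y))=0$ for $i\le 2t-3$, i.e. for $i<2t-2$.

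The part I expect to be the real obstacle is making the Gysin chase close at the top of the range: the vanishing of $H^j_{dR}(U)$ for $j$ near $2t-4$ uses hypothesis (b) precisely at its last index $2t-5$ and relies on $\xi^{\,t-2}\ne 0$, so there is no slack, and the edge terms of the sequence have to be tracked with care. A secondary point that needs an argument beyond standard citations is $H^1_{dR}(Y)=0$ when $Y$ is singular, which has to be extracted from the coherent vanishing $H^q(Y,\mathcal O_Y(m))=0$, $1\le q\le t-2$, rather than from smoothness.
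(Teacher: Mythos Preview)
Your approach is essentially the paper's. The excision identification $H^i_{\{v\},dR}(C(Y))\cong H^{i-1}_{dR}(U)$ composed with the Gysin sequence for the $\mathbb{G}_m$-bundle $U\to Y$ \emph{is} Hartshorne's exact sequence \cite[III, Prop.~3.2]{Ha75} (restated here as Proposition~\ref{Hartshorne}), which the paper invokes directly and then runs by induction on $t$ rather than in one sweep as you do.

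Two points where the accounts diverge are worth noting. You are more careful than the paper in isolating why $\cup\,\xi\colon H^{2a}_{dR}(Y)\to H^{2a+2}_{dR}(Y)$ must be an isomorphism in the relevant range; the paper simply reads the vanishing off $0\to H^{2}_{y,dR}(C(Y))\to k\to k\to H^{3}_{y,dR}(C(Y))\to 0$ without justifying that the middle map is nonzero. Conversely, for the key input $H^1_{dR}(Y)=0$ the paper supplies a self-contained argument that is cleaner than your Du Bois gesture: reduce to $k=\mathbb{C}$, use the exponential sequence $0\to\mathbb{Z}_{Y^{an}}\to\mathcal{O}_{Y^{an}}\to\mathcal{O}^*_{Y^{an}}\to 0$, and conclude via the injection $H^1(Y^{an},\mathbb{Z})\hookrightarrow H^1(Y^{an},\mathcal{O}_{Y^{an}})\cong H^1(Y,\mathcal{O}_Y)\cong (H^2_\fm(R/I))_0=0$, the last vanishing coming from $\depth R/I\ge 3$. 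Finally, your observation that non-vertex closed points impose $\drdepth Y\ge 2t-4$ is correct but left unproved; the paper does not address this point either, treating only the vertex via Proposition~\ref{Hartshorne}.
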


\section{Preliminaries}

In this section, we review some definitions and auxiliary results on the algebraic de Rham cohomology. We assume that the reader is familiar with the basic concepts as stated in \cite{Ha75}. 
Let $Y$ be a scheme of finite type over a field $k$ of characteristic $0$ which admits an embedding as a closed subscheme
of a smooth scheme $X$. We denote by ${\Omega}^p_X$ the sheaf of $p$-differential forms on $X$ over $k$. With these notations in mind, we define the de Rham complex of $X$ as the complex of sheaves of differential forms
 $${\Omega}^{\bullet}_{X}: \mathcal{O}_X  \rightarrow {\Omega}^{1}_{X} \rightarrow {\Omega}^{2}_{X} \rightarrow \cdots.$$
 De Rham cohomology of $Y$ is defined as hypercohomology of the formal completion $\hat{\Omega}^{\bullet}_{X}$ of the de Rham complex of $X$ along $Y$:
$$H^i_{dR}(Y/k)=\mathbb{H}^i(\hat{X},\hat{\Omega}^{\bullet}_{X}).$$
Neither does it depend on $X$ nor on the chosen embedding of $Y$ (\cite[Section III]{Ha75}). If there is no ambiguity on the field $k$ we use $H^i_{dR}(Y)$ instead of $H^i_{dR}(Y/k)$ . Below, we recall the definition of local de Rham cohomology of $Y$ which is the main tool in our investigation.

\begin{definition}\label{0.1}
Let $A$ be a complete local ring with coefficient field $k$ of
characteristic zero. Let $\pi :R \twoheadrightarrow A$ be a surjection of
$k$-algebras where $R=k[[x_1, \ldots , x_n]]$ for some $n$ and let
$Y \hookrightarrow X$ (where $Y=\Spec A, X=\Spec R$) be the
corresponding closed immersion. Let $y \in Y$ be the closed
point. The (local) de Rham cohomology of $Y$ is defined by
$$ H^i_{y, dR}(Y)=\mathbb{H}^i_{y}(\hat{X},\hat{\Omega}^{\bullet}_{X}),\ \text{\ for\ all\ } i.$$
\end{definition}
By the Strong excision \cite[III. Proposition 3.1]{Ha75} we have 
$$ H^i_{y, dR}(Y) =\mathbb{H}^i_{y}(\hat{X},\hat{\Omega}^{\bullet}_{X}) \simeq H^i_{y, dR}(\Spec \hat{\mathcal{O}}_{X,y})$$ as
$k$-spaces for all $i$.

It is noteworthy to mention that $H^i_{dR}(Y)$ and $ H^i_{y, dR}(Y)$ are finite dimensional $k$-vector spaces for all $i$ (\cite[Section III]{Ha75}).

\begin{definition}\label{0.2}
 The "\emph{de Rham depth} of $Y$" which is abbreviated by $\drdepth Y$
is $\geq d$ if and only if for each (not necessarily closed) point
$y \in Y$,
$$H^i_{y,dR}(Y)=0, \  i < d- \dim \overline{\{y\}},$$
where $d$ is an integer and $\overline{\{y\}}$ denotes the closure
of $\{y\}$.
\end{definition}
Note that always $\drdepth Y \leq \dim Y$. If $Y$ is a local complete intersection, then $\drdepth Y = \dim Y$.
Here, and by our assumption on the scheme $Y \subset \mathbb{P}^n_k$ we may suppose that $y$ is a closed point of $Y$. See \cite[pp. 340]{Og}.

We conclude this section with the following remarks, which are useful in the next section.

\begin{remark} \label{0.8} Let $I \subset R=k[x_0, \ldots, x_n]$ be a homogeneous ideal of a polynomial ring over a field $k$ and $\fm=(x_0, \ldots, x_n)$ be its homogeneous maximal ideal. Note that $k = R/\fm$.  Using a suitable gonflement of $R$ (cf. \cite[Chapter IX, Appendice 2]{Bo}) one can take a faithfully flat local homomorphism $f: R \rightarrow S$ with $f(\fm)=\fn$ from $(R,\fm)$ to a regular local ring
$(S,\fn)$ such that $S/\fn$ is the algebraic closure of $k$. Because of faithfully flatness, $S$ contains a field.
It shows that $\depth R/I=\depth S/IS$.
\end{remark}

\begin{remark} \label{0.9} Suppose that $\mathbb{Q} \subseteq k_0 \subseteq k$, where $k$ is a field extension of a field $k_0$. As cohomology of coherent sheaves commutes with flat base extension, then algebraic (local) de Rham cohomology does not alter under field extensions. So one can reduce questions on the vanishing of de Rham cohomology from the given field $k_0$ to the complex field $\mathbb{C}$. 
\end{remark}

\section{Main Result}

Our aim is to find out a relationship between $\depth R/I$ and $\drdepth C(Y)$, where $Y \subset \mathbb{P}^n_k$ is a proper closed subset of projective $n$-space over a field $k$ of characteristic zero and
 $I \subset R=k[x_0, \ldots, x_n]$ is the homogeneous defining
ideal of $Y$. First we consider the case $Y$ is smooth and next we deal with the general case.

\begin{remark}\label{cone}
  Suppose that $Y$ is a smooth variety of dimension $n$, then $\drdepth Y=n$ (\cite[Remark 2.18]{Og}). But $C(Y)-\{y\}$ is smooth of dimension $n+1$, so its $\drdepth$ is $n+1$, where $y$ is the vertex of affine cone $C(Y)$ of $Y$ corresponds to the maximal irrelevant ideal $\fm$ of $R$.
\end{remark}

We are now ready to answer Question \ref{Q} stated in the Introduction.

\begin{proposition}\label{1}
If $\depth R/I \geq 2$, then $\drdepth C(Y) \geq 2$.  
\end{proposition}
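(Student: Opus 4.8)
The plan is to verify, with $d=2$, the vanishing condition of Definition~\ref{0.2} directly. By Remarks~\ref{0.8} and~\ref{0.9} we may assume that $k$ is algebraically closed, and (using the comparison with singular cohomology, cf.\ \cite{Ha75}) even that $k=\mathbb{C}$; and by the reduction to closed points valid for any finite-type $k$-scheme (cf.\ the remark after Definition~\ref{0.2} and \cite[pp. 340]{Og}), it suffices to show that
$$H^0_{y,dR}(C(Y))=H^1_{y,dR}(C(Y))=0 \quad\text{for every closed point } y\in C(Y).$$
I would treat separately the vertex $y_0$ (the point corresponding to $\fm$) and the remaining closed points.

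Let $y\neq y_0$ be a closed point of $C(Y)=\Spec R/I$. Then $y$ lies on the punctured cone $C(Y)\setminus\{y_0\}=\Spec R/I\setminus\V(\fm)$; because $R$ is generated in degree $1$, over each standard affine chart $D_+(x_i)$ of $Y$ this punctured cone is the trivial $\mathbb{G}_m$-bundle $D_+(x_i)\times\mathbb{G}_m$. Hence $y$ admits a Zariski neighborhood in $C(Y)$ isomorphic to $V\times\mathbb{G}_m$, with $V\subset Y$ open affine and $y=(\bar y,c)$ for a closed point $\bar y\in V$. Using Strong excision \cite[III. Proposition 3.1]{Ha75} and the Künneth formula for cohomology with supports at a closed point of the de Rham complex of a product, one gets
$$H^i_{y,dR}(C(Y))\;\cong\;\bigoplus_{p+q=i}H^p_{\bar y,dR}(Y)\otimes_k H^q_{c,dR}(\mathbb{G}_m),$$
and since $c$ is a closed point of the smooth curve $\mathbb{G}_m$, purity gives $H^q_{c,dR}(\mathbb{G}_m)=k$ for $q=2$ and $=0$ otherwise. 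Thus $H^i_{y,dR}(C(Y))\cong H^{i-2}_{\bar y,dR}(Y)$, which vanishes for $i=0,1$; note that the hypothesis on $\depth R/I$ is not needed here.

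For the vertex $y_0$, note first that $C(Y)$ is an affine cone, hence contractible in the analytic topology (scaling retracts it onto $y_0$), so $H^i_{dR}(C(Y))=k$ for $i=0$ and $=0$ for $i>0$. On the other hand, $\depth R/I\geq 2$ is exactly what guarantees that the punctured cone $C(Y)\setminus\{y_0\}$ is nonempty and connected; therefore $H^0_{dR}(C(Y)\setminus\{y_0\})=k$ and the restriction $H^0_{dR}(C(Y))\to H^0_{dR}(C(Y)\setminus\{y_0\})$ is an isomorphism. Inserting this, together with $H^1_{dR}(C(Y))=0$, into the long exact sequence
$$\cdots\to H^i_{y_0,dR}(C(Y))\to H^i_{dR}(C(Y))\to H^i_{dR}(C(Y)\setminus\{y_0\})\to H^{i+1}_{y_0,dR}(C(Y))\to\cdots$$
forces $H^0_{y_0,dR}(C(Y))=0$ (it is the kernel of an isomorphism) and $H^1_{y_0,dR}(C(Y))=0$ (it embeds into $H^1_{dR}(C(Y))=0$). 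Combined with the previous paragraph this gives $\drdepth C(Y)\geq 2$.

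The bookkeeping with the Künneth formula for local de Rham cohomology and the value of $H^q_{c,dR}(\mathbb{G}_m)$ is routine; I expect the real content to sit in the vertex computation, where the hypothesis $\depth R/I\geq 2$ enters precisely to force the punctured cone to be connected and nonempty, so that $H^0_{dR}$ of $C(Y)$ and of $C(Y)\setminus\{y_0\}$ agree and the local de Rham cohomology at the vertex drops out in degrees $0$ and $1$.
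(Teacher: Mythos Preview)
Your argument is correct and, at the vertex $y_0$, identical to the paper's: both use the contractibility of $C(Y)$, the connectedness of $C(Y)\setminus\{y_0\}$ coming from $\depth R/I\geq 2$ (via \cite[Proposition~2.2]{Ha62}), and the long exact sequence with supports to kill $H^0_{y_0,dR}$ and $H^1_{y_0,dR}$.

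The only genuine difference is your treatment of non-vertex closed points. You separate them off and exploit the local product structure $V\times\mathbb{G}_m$ together with a K\"unneth decomposition for local de Rham cohomology, which makes it transparent that no hypothesis on $\depth R/I$ is needed away from the vertex. The paper instead runs the \emph{same} long exact sequence uniformly for every closed point $y$, using that $C(Y)\setminus\{y\}$ stays connected (for $y\neq y_0$ this is automatic, since every irreducible component of the cone has positive dimension and passes through the vertex; the paper's citation of \cite[Proposition~2.2]{Ha62} strictly speaking covers only the vertex). Your route is more explicit about where the depth hypothesis actually enters, at the cost of invoking a K\"unneth formula for local de Rham cohomology; the paper's route is shorter and avoids that extra ingredient.
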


\begin{proof}
To show  $\drdepth C(Y) \geq 2$, we must show that for all $y \in C(Y),\ H^i_{y,dR}(C(Y)) = 0$ for all $i < 2- \dim \overline{\{y\}}$. 
By virtue of \cite[2.15 or 2.16]{Og} one may assume that $y$ is a closed point. Hence, we fix a closed point $y \in C(Y)$ and show that $H^i_{y,dR}(C(Y)) = 0$ for  $i =0,1$.

According to the assumption $\depth R/I \geq 2$, thus $C(Y)-\{y\}$ is connected (\cite[Proposition 2.2]{Ha62}). Therefore, by \cite[pp. 53]{Ha75} we have $H^0_{dR}(C(Y)-\{y\}) \cong k$. Next, consider the long exact sequence 
$$0 \rightarrow H^0_{y,dR}(C(Y)) \rightarrow H^0_{dR}(C(Y)) \rightarrow H^0_{dR}(C(Y)-\{y\}) \rightarrow H^1_{y,dR}(C(Y)) \rightarrow H^1_{dR}(C(Y)), \ (\ast)$$
Notice that $H^0_{dR}(C(Y)) \cong k$ and $H^1_{dR}(C(Y))=0$ because $C(Y)$ is topologically contractible. Hence, from the above exact sequence and the fact that finite dimensional vector spaces with the same dimension are isomorphic, it yields that $H^0_{y,dR}(C(Y))=0= H^1_{y,dR}(C(Y))$.
\end{proof}
It is noteworthy to mention that by virtue of the exact sequence ($\ast$) appeared in the proof of Proposition \ref{1} one may deduce that $\drdepth C(Y) \geq 1$ without any assumptions on $\depth R/I$.

We distinguish the following case (Theorem \ref{2}) from the general case (Theorem \ref{3}) in order to avoid complexity. But before it let us bring a result of Hartshone which is useful in the sequel.

\begin{proposition}\label{Hartshorne} (\cite[Proposition 3.2  page 73.]{Ha75}) Let $Y$ be as above. Then, there are exact sequences
$$0 \rightarrow k \rightarrow H^0_{dR}(Y) \rightarrow H^{1}_{y,dR}(C(Y)) \rightarrow 0$$
and
$$\ldots \rightarrow H^{i-2}_{dR}(Y) \rightarrow H^i_{dR}(Y) \rightarrow H^{i+1}_{y,dR}(C(Y)) \rightarrow H^{i-1}_{dR}(Y) \rightarrow \ldots,$$
where $i \geq 1$ and $y \in C(Y)$ is a vertex.
\end{proposition}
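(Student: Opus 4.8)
The plan is to relate the de Rham cohomology of $Y$ to that of the punctured affine cone and then to feed the latter into the local cohomology sequence at the vertex $y$. Write $U := C(Y) \setminus \{y\}$ for the punctured cone. The key geometric observation is that the tautological projection $\pi \colon U \to Y$ realizes $U$ as the complement of the zero section in the total space of the line bundle $\mathcal{O}_Y(-1)$; thus $\pi$ is a $\mathbb{G}_m$-bundle, Zariski-locally of the form $Y \times \mathbb{G}_m$.

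First I would establish a Gysin sequence for this bundle in de Rham cohomology. By Remark \ref{0.9} it is enough to work over $k=\mathbb{C}$, and then Hartshorne's comparison theorem identifies $H^\bullet_{dR}$ with the singular cohomology of the associated analytic space. Topologically $U$ deformation retracts onto the oriented circle bundle of $\mathcal{O}_Y(-1)$, whose Gysin sequence reads
$$\cdots \to H^{i-2}_{dR}(Y) \xrightarrow{\cup e} H^i_{dR}(Y) \xrightarrow{\pi^\ast} H^i_{dR}(U) \to H^{i-1}_{dR}(Y) \xrightarrow{\cup e} H^{i+1}_{dR}(Y) \to \cdots,$$
with $e$ the Euler class, i.e. (up to sign) $c_1(\mathcal{O}_Y(1))$. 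Its bottom reduces to $0 \to H^0_{dR}(Y) \xrightarrow{\pi^\ast} H^0_{dR}(U) \to 0$, so that $H^0_{dR}(U) \cong H^0_{dR}(Y)$.

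Next I would use that the affine cone $C(Y)$ is topologically contractible, so $H^0_{dR}(C(Y)) = k$ and $H^i_{dR}(C(Y)) = 0$ for $i>0$. Substituting this into the long exact sequence of local de Rham cohomology
$$\cdots \to H^i_{y,dR}(C(Y)) \to H^i_{dR}(C(Y)) \to H^i_{dR}(U) \to H^{i+1}_{y,dR}(C(Y)) \to \cdots$$
and noting $H^0_{y,dR}(C(Y)) = 0$ (as $C(Y)$ is connected and $U \neq \emptyset$), I extract two consequences. For every $i \geq 1$ the vanishing of $H^i_{dR}(C(Y))$ and $H^{i+1}_{dR}(C(Y))$ yields an isomorphism $H^i_{dR}(U) \cong H^{i+1}_{y,dR}(C(Y))$; and in degree $0$ the exact segment $0 \to k \to H^0_{dR}(U) \to H^1_{y,dR}(C(Y)) \to 0$, combined with $H^0_{dR}(U) \cong H^0_{dR}(Y)$, is precisely the first asserted sequence $0 \to k \to H^0_{dR}(Y) \to H^1_{y,dR}(C(Y)) \to 0$.

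Finally I would substitute the isomorphisms $H^i_{dR}(U) \cong H^{i+1}_{y,dR}(C(Y))$ (for $i \geq 1$) term by term into the Gysin sequence, which transforms it verbatim into the second asserted sequence for $i \geq 1$. The main obstacle is the Gysin sequence itself within Hartshorne's formal framework: the clean route is the reduction to $\mathbb{C}$ and the analytic comparison described above, but a purely algebraic construction (via the Leray spectral sequence of the affine morphism $\pi$, whose $\mathbb{G}_m$-fibres contribute only in cohomological degrees $0$ and $1$) would require identifying the differential $H^{i-2}_{dR}(Y) \to H^i_{dR}(Y)$ with cup product by the Euler class and checking its compatibility with the completion along $Y$. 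This bookkeeping, rather than the splicing of the two long exact sequences, is where the genuine work lies.
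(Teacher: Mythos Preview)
The paper does not supply its own proof of this proposition: it is quoted verbatim as \cite[Proposition 3.2, p.~73]{Ha75} and used as a black box. So there is nothing in the paper to compare your argument against.

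That said, your outline is the standard route and is essentially Hartshorne's own argument in the cited reference: one identifies $U=C(Y)\setminus\{y\}$ with a $\mathbb{G}_m$-bundle over $Y$, uses the resulting Gysin-type sequence to compute $H^\bullet_{dR}(U)$ in terms of $H^\bullet_{dR}(Y)$, and then feeds $H^\bullet_{dR}(U)$ into the local cohomology sequence at the vertex together with the contractibility of $C(Y)$. Your honest flag about where the real work lies---setting up the Gysin sequence inside Hartshorne's formal framework rather than via analytic comparison---is accurate; Hartshorne handles this algebraically in \cite{Ha75} by a filtration/spectral-sequence argument for the $\mathbb{G}_m$-bundle, which is exactly the ``Leray for $\pi$ with fibre $\mathbb{G}_m$'' alternative you mention.
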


\begin{theorem}\label{2}
Suppose that $\depth R/I \geq 3$ and
 $H^2_{dR}(Y) = k$. 
Then,
 $\drdepth C(Y) \geq 4$.
\end{theorem}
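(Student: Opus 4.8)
The plan is to verify the defining inequality for $\drdepth$ point by point. After reducing to closed points by \cite[2.15 or 2.16]{Og}, it suffices to prove that $H^i_{y,dR}(C(Y))=0$ for $i=0,1,2,3$ at every closed point $y\in C(Y)$. The cases $i=0,1$ are covered verbatim by Proposition \ref{1} (which uses only $\depth R/I\geq 2$), so the whole content is the vanishing for $i=2,3$; I would treat the vertex $y=\fm$ and the remaining closed points separately, and by Remarks \ref{0.8} and \ref{0.9} I may assume $k=\mathbb{C}$ throughout.

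At the vertex I would feed the hypotheses into the Hartshorne sequences of Proposition \ref{Hartshorne}. Since $\depth R/I\geq 2$ forces $Y=\Proj R/I$ to be connected, $H^0_{dR}(Y)=k$, which re-derives $H^1_{y,dR}(C(Y))=0$ from the first sequence. The second sequence with $i=1$ begins, using $H^{-1}_{dR}(Y)=0$,
\[
0\to H^1_{dR}(Y)\to H^2_{y,dR}(C(Y))\to H^0_{dR}(Y)\xrightarrow{\ \cup e\ }H^2_{dR}(Y),
\]
where $\cup e$ is cup product with the class $e\in H^2_{dR}(Y)$ of $\mathcal{O}_Y(1)$; because this line bundle is ample and $\dim Y\geq 1$ we have $e\neq 0$ (indeed $\int_Y e^{\dim Y}\neq 0$), so $\cup e\colon H^0_{dR}(Y)=k\to H^2_{dR}(Y)$ is injective and hence $H^2_{y,dR}(C(Y))\cong H^1_{dR}(Y)$. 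Putting $i=2$ into the same sequence gives
\[
H^0_{dR}(Y)\xrightarrow{\ \cup e\ }H^2_{dR}(Y)\to H^3_{y,dR}(C(Y))\to H^1_{dR}(Y)\xrightarrow{\ \cup e\ }H^3_{dR}(Y),
\]
and now the hypothesis $H^2_{dR}(Y)=k$ makes this first $\cup e$ also surjective, so $H^3_{y,dR}(C(Y))$ embeds into $H^1_{dR}(Y)$. Thus both remaining vanishings at the vertex reduce to showing $H^1_{dR}(Y)=0$.

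For a closed point $y'\in C(Y)$ distinct from the vertex, lying over a closed point $p\in Y$, the cone $C(Y)\setminus\{\fm\}$ is Zariski-locally a product of $Y$ with a smooth affine curve, so the Strong excision \cite[III. Proposition 3.1]{Ha75} together with a K\"unneth-type computation reduces $H^i_{y',dR}(C(Y))$ to $H^{i-1}_{p,dR}(Y)$, and the vanishing required for $i\leq 3$ becomes exactly the assertion $\drdepth Y\geq 3$. In this way the whole theorem is reduced to the single implication $\depth R/I\geq 3\Rightarrow\bigl(H^1_{dR}(Y)=0\text{ and }\drdepth Y\geq 3\bigr)$, which I expect to be the main obstacle. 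The natural route is to pass through coherent cohomology: $\depth R/I\geq 3$ yields $H^2_{\fm}(R/I)=0$, i.e. $H^1(Y,\mathcal{O}_Y(m))=0$ for all $m$, in particular $H^1(Y,\mathcal{O}_Y)=0$; one must then cross from coherent to de Rham cohomology — this is where characteristic zero is indispensable — by analysing the mixed Hodge structure on $H^1(Y,\mathbb{C})$ (its weight-one part being controlled by a resolution of $Y$, its weight-zero part by the connectedness forced by the depth hypothesis) and, for $\drdepth Y\geq 3$, by invoking the comparison between local de Rham cohomology and local cohomology in the spirit of \cite[Theorem 4.1]{Og}. Carrying out this coherent-to-de Rham bridge carefully is the delicate point of the proof.
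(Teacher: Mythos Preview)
Your overall architecture coincides with the paper's: reduce to closed points, dispose of $i=0,1$ via Proposition~\ref{1}, establish $H^1_{dR}(Y)=0$, and then read off the vanishing for $i=2,3$ from the Hartshorne sequence of Proposition~\ref{Hartshorne}. Two points deserve comment.

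First, your route to $H^1_{dR}(Y)=0$ is considerably heavier than the paper's. After reducing to $k=\mathbb{C}$ and $Y$ reduced, the paper uses the exponential sequence $0\to\mathbb{Z}_{Y^{an}}\to\mathcal{O}_{Y^{an}}\to\mathcal{O}^{\ast}_{Y^{an}}\to 0$: since $Y$ is connected the global-sections row is $0\to\mathbb{Z}\to\mathbb{C}\to\mathbb{C}^{\ast}\to 0$, hence $H^1(Y^{an},\mathbb{Z})$ injects into $H^1(Y^{an},\mathcal{O}_{Y^{an}})\cong H^1(Y,\mathcal{O}_Y)\cong (H^2_{\fm}(R/I))_0=0$ by GAGA and $\depth R/I\geq 3$, and universal coefficients finishes. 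No mixed Hodge theory is needed. On the other hand, your cup-product analysis of the map $H^0_{dR}(Y)\to H^2_{dR}(Y)$ is more explicit than the paper, which simply records that both spaces equal $k$ and concludes; your observation that $e\neq 0$ (ampleness) is the honest reason that map is an isomorphism.

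Second, the paper does not distinguish the vertex from other closed points of $C(Y)$: it invokes Proposition~\ref{Hartshorne}, which is specific to the vertex, as though this handled all closed points. Your separate treatment of non-vertex points is therefore addressing something the paper leaves implicit. Note, however, that your K\"unneth shift is off by one: the local de Rham cohomology of a closed point in a smooth curve is concentrated in degree $2$, so $H^i_{y',dR}(C(Y))\cong H^{i-2}_{p,dR}(Y)$, and what is actually needed at non-vertex points is only $\drdepth Y\geq 2$, not $\geq 3$.
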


\begin{proof}
Note that the cohomology of $Y$ and $Y_{red}$
are the same. Then we may assume that $Y$ is reduced and by Remarks \ref{0.8} and \ref{0.9} we may suppose that $k=\mathbb{C}$.

We show that $\drdepth C(Y) \geq 4$.  According to what we have seen in the proof of Proposition \ref{1}, it is enough to show that $H^i_{y,dR}(C(Y))=0$ for $i=0,1,2, 3$ for a closed point $y \in C(Y)$. 
As $\depth R/I \geq 3$, by Proposition \ref{1} one has $H^i_{y,dR}(C(Y))=0$ for $i=0,1$. Thus, it remains to show that $H^i_{y,dR}(C(Y))=0$ for $i=2,3$.

Let us give an outline of the proof. First, we show that $H^1_{dR}(Y) = 0$ and then using Proposition \ref{Hartshorne}, we will show that $H^i_{y,dR}(C(Y))=0$ for $i=2,3$.

\textbf{Proof of $H^1_{dR}(Y) = 0$:} 

As $H^1_{dR}(Y) \cong H^1(Y^{an}, \mathbb{C})$, To prove the claim, using the universal coefficient Theorem it is enough to prove that $H^1(Y^{an}, \mathbb{Z})=0$. Since $Y^{an}$ is a reduced complex analytic space, one has the exact sequence of sheaves 
$$ 0 \longrightarrow \mathbb{Z}_{Y^{an}} \longrightarrow \mathcal{O}_{Y^{an}} \longrightarrow \mathcal{O}^{\ast}_{Y^{an}} \longrightarrow 0,$$
where $\mathbb{Z}_{Y^{an}}$ is the constant sheaf, $\mathcal{O}_{Y^{an}}$ is the structure sheaf, and $\mathcal{O}^{\ast}_{Y^{an}}$ is the sheaf
of invertible elements of $\mathcal{O}_{Y^{an}}$ under multiplication. The cohomology sequence of this short exact sequence implies the exact sequence of groups
$$ 0 \longrightarrow \mathbb{Z} \longrightarrow \mathbb{C} \longrightarrow \mathbb{C}^{\ast} \longrightarrow 0,$$
 because the global holomorphic functions are constant, \cite[Page 446]{Ha77}. Then we have the injective map 
 $ H^1(Y^{an}, \mathbb{Z}_{Y^{an}}) \rightarrow H^1(Y^{an}, \mathcal{O}_{Y^{an}})$. On the other hand, note that 
$$H^1(Y^{an}, \mathcal{O}_{Y^{an}}) \cong H^1(Y, \mathcal{O}_{Y}) \cong (H^2_{\fm}(R/I))_0=0,$$
because $\depth R/I \geq 3$. To this end, note that the first isomorphism follows from \cite{Serre56}. Hence, $ H^1(Y^{an}, \mathbb{Z}) \cong H^1(Y^{an}, \mathbb{Z}_{Y^{an}}) $ must be zero.

Next, exploiting Proposition \ref{Hartshorne} we have the following exact sequence 
$$0= H^{1}_{dR}(Y)  \rightarrow H^{2}_{y,dR}(C(Y)) \rightarrow H^{0}_{dR}(Y) \rightarrow H^{2}_{dR}(Y) \rightarrow H^{3}_{y,dR}(C(Y)) \rightarrow H^{1}_{dR}(Y)=0,$$
which that $H^i_{y,dR}(C(Y))=0$ for $i=2,3$. To this end, note that by the assumption $H^{2}_{dR}(Y)=k$ and because of the vanishing of $H^1_{y,dR}(C(Y))$ and the Proposition \ref{Hartshorne} we have $H^{0}_{dR}(Y)=k$. 
\end{proof}

\begin{theorem}\label{3}
Suppose that $\depth R/I =t \geq 3$. Furthermore suppose that
\begin{itemize}
  \item[(a)] the number of consecutive even integers $i$ starting from $2$ with $H^i_{dR}(Y) =k$ is $t-2$,
  and
  \item[(b)] the number of consecutive odd integers $i$ starting from $3$ with $H^i_{dR}(Y) =0$ is $t-3$.
\end{itemize}
Then,
 $\drdepth C(Y) \geq 2t-2$.
\end{theorem}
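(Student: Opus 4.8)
The plan is to run the argument of Theorem \ref{2} through the full range of degrees allowed by $t$. As in that proof, replace $Y$ by $Y_{red}$ and then $k$ by $\mathbb{C}$ using Remarks \ref{0.8} and \ref{0.9}, and reduce (exactly as in Proposition \ref{1}, via \cite[2.15 or 2.16]{Og}) to proving that $H^i_{y,dR}(C(Y))=0$ for $i=0,1,\dots,2t-3$, where $y$ is the vertex of $C(Y)$. The first step is to record the low-degree de Rham cohomology of $Y$. Proposition \ref{1} (applicable since $t\ge 3\ge 2$) gives $H^0_{y,dR}(C(Y))=H^1_{y,dR}(C(Y))=0$; plugging $H^1_{y,dR}(C(Y))=0$ into the first sequence of Proposition \ref{Hartshorne} gives $H^0_{dR}(Y)=k$; and the exponential-sheaf argument in the proof of Theorem \ref{2} (using $\depth R/I\ge 3$, whence $H^1(Y,\mathcal{O}_Y)\cong(H^2_{\fm}(R/I))_0=0$) gives $H^1_{dR}(Y)=0$. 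Together with hypotheses (a) and (b) this determines $H^j_{dR}(Y)$ for all $0\le j\le 2t-4$: it equals $k$ when $j$ is even and $0$ when $j$ is odd.

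Next I would substitute this information into the second exact sequence of Proposition \ref{Hartshorne}. For each $i$ with $1\le i\le 2t-4$ it contains the exact piece
$$H^{i-2}_{dR}(Y)\ \xrightarrow{\ \theta\ }\ H^i_{dR}(Y)\ \longrightarrow\ H^{i+1}_{y,dR}(C(Y))\ \longrightarrow\ H^{i-1}_{dR}(Y)\ \xrightarrow{\ \theta\ }\ H^{i+1}_{dR}(Y),$$
so that $H^{i+1}_{y,dR}(C(Y))=0$ whenever $\theta\colon H^{i-2}_{dR}(Y)\to H^i_{dR}(Y)$ is surjective and $\theta\colon H^{i-1}_{dR}(Y)\to H^{i+1}_{dR}(Y)$ is injective. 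By the description of $H^{\ast}_{dR}(Y)$ just obtained, for every such $i$ one of the two pairs of groups occurring lies in odd degrees $\le 2t-5$ and hence vanishes, so the corresponding condition is automatic; the only substantial requirement is that $\theta\colon H^{2m}_{dR}(Y)\to H^{2m+2}_{dR}(Y)$ be an isomorphism (a nonzero map $k\to k$) for $m=0,1,\dots,t-3$. Granting this, a routine check over $i=1,\dots,2t-4$ yields $H^i_{y,dR}(C(Y))=0$ for $2\le i\le 2t-3$; combined with Proposition \ref{1} this gives $H^i_{y,dR}(C(Y))=0$ for all $0\le i\le 2t-3$, that is, $\drdepth C(Y)\ge 2t-2$.

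The crux --- and the step I expect to be the main obstacle --- is therefore the non-vanishing of the maps $\theta$. The map $H^{i-2}_{dR}(Y)\to H^i_{dR}(Y)$ appearing in Proposition \ref{Hartshorne} is, up to sign, cup product with $c_1(\mathcal{O}_Y(1))$; this is visible from the fact that Proposition \ref{Hartshorne} is obtained by splicing the local cohomology sequence of the pair $\bigl(C(Y),C(Y)\setminus\{y\}\bigr)$ with the Gysin sequence of the $\mathbb{C}^{\ast}$-bundle $C(Y)\setminus\{y\}\to Y$. Since $\mathcal{O}_Y(1)$ is the restriction of $\mathcal{O}_{\mathbb{P}^n}(1)$ it is ample, so it suffices to show that $c_1(\mathcal{O}_Y(1))^m\neq 0$ in $H^{2m}_{dR}(Y)=H^{2m}(Y^{an},\mathbb{C})$ for $1\le m\le t-2$. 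First, hypothesis (a) forces $H^{2t-4}_{dR}(Y)=k\neq 0$, and $H^j(Y^{an},\mathbb{C})=0$ for $j>2\dim Y$, so $\dim Y\ge t-2$; hence for each $m\le t-2$ there is an $m$-dimensional closed subvariety $Z\subseteq Y$ (an iterated general hyperplane section of a $(\ge m)$-dimensional component). Pairing with the fundamental class of the projective variety $Z$ gives $\langle c_1(\mathcal{O}_Y(1))^m,[Z]\rangle=\deg_{\mathcal{O}_Y(1)}Z>0$ by ampleness, so $c_1(\mathcal{O}_Y(1))^m\neq 0$; since each $H^{2m}_{dR}(Y)$ with $m\le t-2$ is one-dimensional, this forces every $\theta\colon H^{2m}_{dR}(Y)\to H^{2m+2}_{dR}(Y)$ with $m\le t-3$ to be an isomorphism. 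The points requiring care are the identification of the connecting maps of Proposition \ref{Hartshorne} with cup product by $c_1$, and the intersection-theoretic non-vanishing on the possibly singular space $Y$ (for which one works, after passing to $Y_{red}$, with a resolution of singularities of $Z$, or directly with the fundamental class of the compact complex analytic space $Z^{an}$).
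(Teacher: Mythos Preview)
Your overall strategy coincides with the paper's: feed the de~Rham data on $Y$ into the long exact sequence of Proposition~\ref{Hartshorne} to kill $H^i_{y,dR}(C(Y))$ for $0\le i\le 2t-3$. The paper packages this as an induction on $t$ with base case Theorem~\ref{2}, while you run through all degrees at once; this is only a cosmetic difference, since the inductive step is exactly the two new vanishings you isolate for each $t$.

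The substantive difference is that you isolate, and then supply an argument for, the one nontrivial point the paper leaves implicit. Both in Theorem~\ref{2} and in the inductive step of Theorem~\ref{3}, the paper arrives at an exact piece of the form $0\to H^{2m+1}_{y,dR}(C(Y))\to k\to k\to H^{2m+2}_{y,dR}(C(Y))\to 0$ and immediately concludes that the outer terms vanish; this of course requires that the middle map $\theta\colon H^{2m}_{dR}(Y)\to H^{2m+2}_{dR}(Y)$ be nonzero, which the paper does not address. Your identification of $\theta$ with cup product by $c_1(\mathcal{O}_Y(1))$ (via the factorization of Proposition~\ref{Hartshorne} through the Gysin sequence of the $\mathbb{C}^{\ast}$-bundle $C(Y)\setminus\{y\}\to Y$) and the positivity argument $c_1(\mathcal{O}_Y(1))^m\!\cdot[Z]=\deg_{\mathcal{O}(1)}Z>0$ for an $m$-dimensional subvariety $Z$ is the standard way to justify this, and your observation that hypothesis~(a) forces $\dim Y\ge t-2$ is exactly what is needed to guarantee such a $Z$ exists for each $m\le t-2$. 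So your proposal follows the paper's line but is more complete on the point where the paper is silent; the two caveats you flag (the identification of $\theta$ with $\smile c_1$ on a singular $Y$, and the cycle-class pairing) are genuine but routine once one works on $Y_{\mathrm{red}}$ over $\mathbb{C}$ as you do.
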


\begin{proof} We prove the claim by induction on $t$. The case $t=3$ has been shown in Theorem \ref{2}.
Now assume that $t >3$ and the claim is true for $t-1$, i.e. $\drdepth C(Y) \geq 2t-4$.
In order to show that $\drdepth C(Y) \geq 2t-2$, according to what we have seen in the proof of Proposition \ref{1}, it is enough to show that $H^i_{y,dR}(C(Y))=0$ for $i=0, \cdots, 2t-3$ for a closed point $y \in C(Y)$. 
As $\depth R/I = t > t-1$, by the induction hypothesis one has $H^i_{y,dR}(C(Y))=0$ for $i=0, \cdots, 2t-5$. Thus, it remains to show that $H^i_{y,dR}(C(Y))=0$ for $i=2t-4, 2t-3$, where the proof is completely similar to what we have seen in the proof of Theorem \ref{2}.
\end{proof}

We end this section by an instant byproduct from Theorem \ref{3} in conjunction with \cite[Corollary 7.3, page 86]{Ha75}:

\begin{corollary}\label{2.1}
Let $Y \subset \mathbb{P}^n_k$ be a set-theoretic complete intersection projective variety over a field $k$ of characteristic zero and
let $I \subset R=k[x_0, \ldots, x_n]$ be the homogeneous defining
ideal of $Y$. If $\depth R/I =t \geq 3$ and dimension of $Y$ is $2t-3$, then
 $\drdepth C(Y) \geq 2t-2$.
\end{corollary}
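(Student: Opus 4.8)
The plan is to obtain the statement as a direct consequence of Theorem \ref{3}: the set-theoretic complete intersection hypothesis is used only to verify conditions (a) and (b) of that theorem, the depth hypothesis $\depth R/I = t \geq 3$ being already in place.

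I would begin by recalling the de Rham cohomology of projective space, $H^i_{dR}(\mathbb{P}^n_k) = k$ for even $i$ with $0 \leq i \leq 2n$ and $H^i_{dR}(\mathbb{P}^n_k) = 0$ otherwise. The key external input is \cite[Corollary 7.3, page 86]{Ha75}, according to which a set-theoretic complete intersection $Y$ in $\mathbb{P}^n_k$ has the de Rham cohomology of the ambient projective space in low degrees, i.e. $H^i_{dR}(Y) \cong H^i_{dR}(\mathbb{P}^n_k)$ for $i < \dim Y$. With $\dim Y = 2t-3$ this range is $0 \leq i \leq 2t-4$, so $H^i_{dR}(Y) = k$ for the even integers $i = 2, 4, \ldots, 2t-4$ and $H^i_{dR}(Y) = 0$ for the odd integers $i = 3, 5, \ldots, 2t-5$ (and, should the cited corollary in fact extend to $i \leq \dim Y$, also $H^{2t-3}_{dR}(Y) = 0$, which does no harm). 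Counting indices, there are exactly $t-2$ even integers $i$, starting from $2$, with $H^i_{dR}(Y) = k$, and at least $t-3$ odd integers $i$, starting from $3$, with $H^i_{dR}(Y) = 0$; these are precisely conditions (a) and (b) of Theorem \ref{3}.

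It then remains only to apply Theorem \ref{3}, which gives $\drdepth C(Y) \geq 2t-2$. As an aside worth recording, $\dim C(Y) = \dim Y + 1 = 2t-2$ and $\drdepth$ never exceeds dimension, so the bound is in fact an equality. I do not anticipate a substantive obstacle here: the one point requiring care is matching the range in \cite[Corollary 7.3, page 86]{Ha75} against the index counting in (a) and (b) — the dimension hypothesis $\dim Y = 2t-3$ is tailored exactly so that the even degrees $2, \ldots, 2t-4$ and the odd degrees $3, \ldots, 2t-5$ fall inside that range — together with the routine checks that ``variety'' supplies the reducedness and irreducibility (hence connectedness) under which the cited comparison result applies, and that $2t-3 < n$, which follows from $Y$ being a proper closed subset of $\mathbb{P}^n_k$.
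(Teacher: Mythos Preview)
Your proposal is correct and follows exactly the route the paper intends: invoke \cite[Corollary 7.3, page 86]{Ha75} for the set-theoretic complete intersection $Y$ of dimension $2t-3$ to obtain $H^i_{dR}(Y)\cong H^i_{dR}(\mathbb{P}^n_k)$ for $i<2t-3$, read off conditions (a) and (b) of Theorem \ref{3}, and conclude. Your additional remark that $\dim C(Y)=2t-2$ forces equality $\drdepth C(Y)=2t-2$ is a correct bonus not stated in the paper.
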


\subsection*{Acknowledgment}The author would like to thank Professor Robin Hartshorne for helpful comments on the earlier version of the paper. We are also grateful to Professor Arthur Ogus for a discussion on the de Rham depth.


\begin{thebibliography}{10}


\bibitem {Bo}
N. Bourbaki, \emph{\'{E}l\'{e}ments de math\'{e}matique. Alg\'{e}bre commutative.}, Chapters 8 et 9, Springer, (2008).
 
\bibitem{Gr66}
 A. Grothendieck, \emph{On the De Rham cohomology of algebraic varieties.}, Publ. Math. I.H.E.S., {\bf 29}, 95--103 (1966).

\bibitem {Ha62}
R. Hartshorne, \emph{Complete Intersections and Connectedness.}, Amer. J. of Math. {\bf 84}(3), 497--508 (1962).

\bibitem {Ha68}
R. Hartshorne, \emph{Cohomological dimension of algebraic varieties.}, Ann. of Math. {\bf}, 403--450 (1968).

\bibitem{Ha75}
R. Hartshorne, \emph{On the de Rham cohomology of algebraic varieties}, Inst. Hautes Etudes Sci. ´
Publ. Math {\bf 45}, 5--99 (1975).

\bibitem {Ha77}
R. Hartshorne, \emph{Algebraic Geometry.}, Springer Graduate Texts, (1977).

 \bibitem{Og}
A. Ogus, \emph{Local cohomological dimension of algebraic varieties.}, 
 Ann. of Math. ,  {\bf 98}(2), 327--365 (1973).
 
  \bibitem{Serre56}
J. P. Serre, \emph{Geometrie algebrique et geometrie analytique.}, 
Ann. Inst. Fourier,  {\bf 6}, 1--42 (1956).
  
\end{thebibliography}
\end{document}